\newtheorem{theorem}{Theorem}
\newtheorem{lemma}[equation]{Lemma}
\newtheorem{corollary}{Corollary}
\theoremstyle{definition}
\newcounter{minutes}\setcounter{minutes}{\time}
\newcounter{hours}\setcounter{hours}{\time}
\begin{document}

\title[Estimates for the hyperbolic and quasihyperbolic metrics in hyperbolic regions]
{Estimates for the hyperbolic and quasihyperbolic metrics in hyperbolic regions}

%%%%%%%% BEGIN TIMESTAMP
\def\thefootnote{}
\footnotetext{ \texttt{\tiny File:~\jobname .tex,
          printed: \number\day-\number\month-\number\year,
          \thehours.\ifnum\theminutes<10{0}\fi\theminutes}
} \makeatletter\def\thefootnote{\@arabic\c@footnote}\makeatother
%%%%%%%% END TIMESTAMP

\author{Swadesh Kumar Sahoo}
\address{Swadesh Kumar Sahoo, Discipline of Mathematics,
Indian Institute of Technology Indore,
Indore 452 017, India}
\email{swadesh@iiti.ac.in}

\subjclass[2010]{Primary: 30C35; Secondary: 30C20, 51M16}
\keywords{Conformal mapping, universal covering mapping, hyperbolic and quasihyperbolic densities,  
hyperbolic and quasihyperbolic metrics, hyperbolic regions}

\begin{abstract}
In this paper we consider ordinary derivative of universal covering mappings $f$ of hyperbolic regions $D$ in the complex plane. 
We obtain sharp bounds for the ratio $|f'(z)|/{\rm dist}(f(z),\partial f(D))$ in terms of the 
hyperbolic density in simply connection domains. In arbitrary domains, we find a
necessary and sufficient condition for an upper bound for the quantity 
$|f'(z)|/{\rm dist}(f(z),\partial f(D))$ to hold in terms of the hyperbolic density.
As an application of the above results, it is observed that the bounds for the quantity 
of the above type are closely connected with similar bounds for $|f''(z)/f'(z)|$.
\end{abstract}

\maketitle
\pagestyle{myheadings}
\markboth{S. K. Sahoo}{hyperbolic regions}

\thispagestyle{empty}

\section{Introduction and preliminaries}
In this paper, we mean {\em regions} which are open and connected sets.
In view of the {\em Riemann mapping} and {\em uniformization theorems} in the classical complex analysis,
the unit disk $\mathbb{D}=\{z\in\mathbb{C}:\,|z|<1\}$ is usually considered
as a standard region. Both the theorems are connected with hyperbolic regions in the complex plane $\mathbb{C}$. 
Suppose that $D$ is a {\em hyperbolic region} in the complex plane; that is, $\mathbb{C}\setminus D$ contains
at least two points carries a Poincar\'{e} metric. Since the universal covering surface of such a region is conformally 
equivalent to the unit disk $\mathbb{D}$, there is a locally univalent, analytic covering map $f$ mapping $\mathbb{D}$
onto $D$ satisfying
\begin{equation}\label{sec1-eq0}
\lambda_D(f(z))|f'(z)|=\lambda_{\mathbb{D}}(z)=\frac{1}{1-|z|^2},\quad z\in\mathbb{D}.
\end{equation}
We say $\lambda_D(w)$, $w\in D$, {\em the hyperbolic or Poincar\'{e} metric} of $D$. 
We also sometimes call this quantity {\em the hyperbolic density}.
The {\em hyperbolic distance} between two points $w_1$ and $w_2$ in $D$ is defined by the quantity
$$h_D(w_1,w_2)=\inf\int_\gamma\lambda_D(w)\,|dw|
$$
where the infimum is taken over all rectifiable paths connecting $w_1$ and $w_2$ in $D$.
As there is always a question as to the normalization of $\lambda_D(z)$, we consider the definition
to have its curvature $-4$. Several properties of 
this metric can be found, for instance, in \cite{KL07,Leh87,Pom92}. 
Note that this definition is independent of the choice of $f$ in the sense that it continues to hold if $f$ is replaced by
$f\circ h$, where $h$ is a M\"{o}bius transformation of the unit disk onto itself. If $D$ is simply connected, then $f$
is a conformal (analytic and univalent) mapping of $\mathbb{D}$ onto $D$. If $w\in D$ 
and $f:\,\mathbb{D}\to D$ is a conformal mapping with $f(0)=w$
then we have the useful relation
\begin{equation}\label{sec1-eq1}
\lambda_D(w)=\frac{1}{|f'(0)|}.
\end{equation}
At this point the only fact we require about the hyperbolic metric is its conformal invariance. If
$f$ is a conformal mapping of a region $G$ onto $D$ then
\begin{equation}\label{sec1-eq2}
\lambda_D(f(z))|f'(z)|=\lambda_G(z),\quad z\in G. 
\end{equation}
This follows easily from (\ref{sec1-eq0}) and (\ref{sec1-eq1}).

If $\delta_D(z)$ denotes the Euclidean distance from $z$ to the boundary, 
$\partial D$, of $D$ then the {\em quasihyperbolic metric} on $D$ is $|dz|/\delta_D(z)$. 
The {\em quasihyperbolic distance} \cite{GP76} between pair of points $z_1,z_2\in D$ is
defined by 
$$k_D(z_1,z_2)=\inf\int_\gamma \frac{|dz|}{\delta_D(z)}
$$
where the infimum is chosen over all rectifiable paths $\gamma\in D$ joining $z_1$ and $z_2$.
The sharp relations between the hyperbolic and quasihyperbolic metrics, namely,
\begin{equation}\label{sec1-eq3}
\frac{1}{4}\frac{1}{\delta_D(z)}\le \lambda_D(z)\le \frac{1}{\delta_D(z)},\quad z\in D 
\end{equation}
is well-known. The right-hand inequality holds for any hyperbolic region $D$ and follows from the monotonicity
of the hyperbolic metric; if $D_1\subset D_2$ then $\lambda_{D_1}\le \lambda_{D_2}$, $z\in D_1$. The left-hand
inequality holds for any simply connected region and is implied by (actually equivalent to) the Koebe $1/4$-Theorem.
Both the inequalities are sharp (see for instance \cite{Pom92}).  

If $f$ is analytic and locally univalent then the pre-Schwarzian and Schwarzian derivatives of $f$ are respectively
defined to be
$$T_f(z)=\frac{f''(z)}{f'(z)}
~~\mbox{ and }~~
S_f(z)=T_f'(z)-\frac{1}{2}(T_f(z))^2.
$$ 
Among their several properties, whenever the compositions make sense with suitable choice of $g$, 
they exhibit the following transformation laws (see \cite{Leh77,Osg82}):
\begin{equation}\label{composition}
T_{f\circ g}(z)=(T_f\circ g)g'(z)+T_g(z) 
~~\mbox{ and }~~
S_{f\circ g}(z)=(S_f\circ g)(g'(z))^2+S_g(z).
\end{equation}
Using these transformations, Lehto \cite{Leh77} and Osgood \cite{Osg82} respectively proved that 
$$|S_f(z)|\le 12\,\lambda_D(z)^2
~~\mbox{ and } 
|T_f(z)|\le 8\,\lambda_D(z)
$$ 
whenever $f$ is analytic and univalent in a simply connected region $D$. In contrast, the situation in
arbitrary hyperbolic regions is much different. Indeed, in one hand Beardon and Gehring \cite{BG80} have shown
$|S_f(z)|\le 12\,\lambda_D(z)^2$, and on the other hand Osgood \cite{Osg82} has obtained a similar property 
equivalent to an inequality involving the hyperbolic and quasihyperbolic densities. These properties are again sharp.
In terms of the quasihyperbolic density, Gehring \cite{Geh77} has proved the 
sharp inequality $|S_f(z)|\le 6/\delta_D(z)^2$,
whereas Osgood \cite{Osg82} has proved the sharp inequality $|T_f(z)|\le 4/\delta_D(z)$ for arbitrary proper subregions $D$
of the complex plane (see also \cite[pp.~395]{MS79}).

In the sequel, our aim in this paper is to search for functions $f$ satisfying the relation(s) (\ref{composition}) 
and study their similar properties in terms of the hyperbolic and quasihyperbolic metrics. 
Indeed, we found that the ordinary derivative function $f'(z)$ satisfies 
one of these composition properties and leads results of the following type:\\

\noindent
{\bf Theorem~A.} If $D\subset \mathbb{C}$ is a hyperbolic region and $f(z)$ is a 
universal covering mapping in $D$, then 
$$\frac{|f'(z)|}{\delta_{f(D)}(f(z))}\le c\,w_D(z), \quad z\in D.
$$ 
for some constant $c>0$.

Here, the notation $w_D(z)$ is used for the density function which is either the hyperbolic density $\lambda_D(z)$ 
or the quasihyperbolic density $1/\delta_D(z)$. Note that a similar result of type Theorem~A
has recently been obtained in \cite{KVZ14} with respect to the quasihyperbolic density as
an upper bound.

\section{Simply connected hyperbolic regions}
The famous {\em Koebe distortion theorem} plays a crucial role to prove most of our theorems in this paper.\\
\noindent 
{\bf Theorem B}. If $f$ maps $\mathbb{D}$ conformally into $\mathbb{C}$ then
$$\frac{1}{4}(1-|z|^2)|f'(z)|\le {\rm dist}(f(z),\partial{f(\mathbb{D})})\le (1-|z|^2)|f'(z)|
$$
for $z\in \mathbb{D}$.

We now prove our first result by making use of the relations (\ref{sec1-eq1}) and the Koebe distortion theorem.
\begin{theorem}\label{sec2-thm1}
If $D\subset \mathbb{C}$ is a simply connected region and $f(z)$ is a conformal mapping in $D$,
then 
$$\lambda_D(z)\le \frac{|f'(z)|}{\delta_{f(D)}(f(z))}\le 4\,\lambda_D(z)
$$ 
for all $z\in D$. The inequalities are sharp.
\end{theorem}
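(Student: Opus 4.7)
The plan is to reduce the general simply connected case to the unit disk by composing with a Riemann map, then invoke Theorem B (Koebe's distortion theorem) together with the normalization relation (\ref{sec1-eq1}).

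Concretely, fix $z\in D$. Since $D$ is simply connected and hyperbolic, the Riemann mapping theorem yields a conformal map $\varphi\colon\mathbb{D}\to D$ with $\varphi(0)=z$. By (\ref{sec1-eq1}) applied to $\varphi$, one has $\lambda_D(z)=1/|\varphi'(0)|$. Now set $g=f\circ\varphi$. Because $f$ is conformal on $D$ and $\varphi$ is conformal on $\mathbb{D}$, the composition $g\colon\mathbb{D}\to f(D)$ is again a conformal map of $\mathbb{D}$ into $\mathbb{C}$, and in particular $g(\mathbb{D})=f(D)$.

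Applying Theorem B to $g$ at the point $0\in\mathbb{D}$ gives
$$\tfrac{1}{4}|g'(0)|\le \delta_{f(D)}(g(0))\le |g'(0)|.$$
Since $g(0)=f(z)$ and $g'(0)=f'(z)\,\varphi'(0)$, dividing through by $|g'(0)|=|f'(z)||\varphi'(0)|$ and substituting $|\varphi'(0)|=1/\lambda_D(z)$ rearranges immediately to the two inequalities claimed. The conformal invariance argument implicit here is precisely (\ref{sec1-eq2}), and the fact that $g$ lands in $\mathbb{C}$ (not the sphere) is what makes Koebe applicable.

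The main step of substance, rather than a true obstacle, is recognizing that one must apply Koebe to the \emph{composition} $f\circ\varphi$ and not to $f$ or $\varphi$ individually; once the composition is set up, the proof is a single line of algebra. For the sharpness assertion I would exhibit two extremal configurations on $D=\mathbb{D}$, evaluated at $z=0$: taking $f$ to be the identity saturates the lower bound, since then $|f'(0)|/\delta_{\mathbb{D}}(0)=1=\lambda_{\mathbb{D}}(0)$; taking $f$ to be the Koebe function $f(z)=z/(1-z)^2$ saturates the upper bound, since $|f'(0)|=1$, $\delta_{f(\mathbb{D})}(0)=1/4$, and $\lambda_{\mathbb{D}}(0)=1$, giving the ratio $4=4\lambda_{\mathbb{D}}(0)$. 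These extremal cases match exactly the equality cases in the two sides of Theorem B, which is the only nontrivial input.
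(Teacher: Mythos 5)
Your argument is correct and is essentially the paper's own proof: both compose $f$ with a Riemann map $\varphi$ of $\mathbb{D}$ onto $D$ normalized at $z$, apply Theorem B (Koebe distortion) to $f\circ\varphi$ at the origin, and use the relation $\lambda_D(z)=1/|\varphi'(0)|$, with the same extremal examples (the identity and the Koebe function, the paper merely checking the Koebe function along the whole radius rather than only at $0$) for sharpness.
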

\begin{proof}
Choose a conformal mapping $g$ of $\mathbb{D}$ onto $D$ with $g(0)=z$.
Then $f\circ g$ is conformal and satisfies the relation
$$|f'(g(\zeta))|\,|g'(\zeta)|= |(f\circ g)'(\zeta)|
$$
for all $\zeta\in \mathbb{D}$. In particular, when $\zeta=0$ we get
$$|f'(z)|\,|g'(0)|= |(f\circ g)'(0)|
$$
for all $z\in D$. Since $g$ and $f\circ g$ are conformal in $\mathbb{D}$, it follows from (\ref{sec1-eq1}) and
the Koebe distortion theorem that 
$$\frac{1}{|g'(0)|}\delta_{f(D)}(f(z))\le |f'(z)|\le \frac{4}{|g'(0)|}\delta_{f(D)}(f(z))=4\,\lambda_D(z)\delta_{f(D)}(f(z))
$$
for all $z\in D$.

For the sharpness in the right hand estimate, we consider the Koebe function $k(z)=z/(1-z)^2$, $z\in \mathbb{D}$. Then
we see that for $z=x<1$
$$k'(x)=\frac{1+x}{(1-x)^3} ~~\mbox{ and }~~
\delta_{k(\mathbb{D})}(k(x))=|k(x)+(1/4)|=\frac{(1+x)^2}{4(1-x)^2}.
$$
Hence,
$$\frac{|k'(x)|}{\delta_{k(D)}(k(x))}=\frac{4}{1-x^2}=4\,\lambda_{\mathbb{D}}(x).
$$

On the other hand, we consider the identity function $f(z)=z$ in the unit disk $\mathbb{D}$. Clearly
$$\frac{|f'(0)|}{\delta_{\mathbb{D}}(f(0))}=1=\lambda_{\mathbb{D}}(0).
$$
The assertion follows.
% For the sharpness, consider the conformal mapping $f(z)=1/z$ in $D:=\mathbb{C}\setminus[-\infty,0]$.
% Then $f$ is conformal in $D$ with $f'(z)=-1/z^2$. 
% The hyperbolic metric of $D$ is determined from (\ref{sec1-eq2}) by mapping the right-half plane
% $H:=\{z\in\mathbb{C}:\,{\rm Re}\,z>0\}$
% conformally onto $D$ via $z=w^2$ and using $\lambda_H(w)=(2\,{\rm Re}\,w)^{-1}$. On the other hand,
% $f$ maps $D$ onto itself. Therefore, we find along
% the positive real axis that $\lambda_D(x)=(4x)^{-1}$ and $\delta_{f(D)}(f(x))=1/x$. Hence
% $$\lambda_D(x)^{-1}|f'(x)|\delta_{f(D)}(f(x))=4
% $$
% for all $x>0$.
\end{proof}

As a consequence of Theorem~\ref{sec2-thm1}, we get
\begin{corollary}
If $D\subset \mathbb{C}$ is a simply connected region and $f(z)$ is a conformal mapping in $D$ then
$$h_D(z_1,z_2)\le k_{f(D)}(f(z_1),f(z_2))\le 4\,h_D(z_1,z_2)
$$
for all $z_1,z_2\in D$. The inequalities are best possible.
\end{corollary}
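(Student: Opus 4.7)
The plan is to upgrade the pointwise inequality of Theorem~\ref{sec2-thm1} to a distance inequality by integrating along paths and then taking infima. The one new ingredient is that conformality of $f$ on the simply connected region $D$ makes $f\colon D\to f(D)$ a homeomorphism, so that the assignment $\gamma\mapsto f\circ\gamma$ is a bijection between rectifiable paths from $z_1$ to $z_2$ in $D$ and rectifiable paths from $f(z_1)$ to $f(z_2)$ in $f(D)$.

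Concretely, I would first apply the change of variables $w=f(z)$ to the quasihyperbolic integral over $f\circ\gamma$, so that
$$\int_{f\circ\gamma}\frac{|dw|}{\delta_{f(D)}(w)} \;=\; \int_\gamma\frac{|f'(z)|}{\delta_{f(D)}(f(z))}\,|dz|.$$
Theorem~\ref{sec2-thm1} then sandwiches the integrand on the right between $\lambda_D(z)$ and $4\lambda_D(z)$, yielding
$$\int_\gamma \lambda_D(z)\,|dz| \;\le\; \int_{f\circ\gamma}\frac{|dw|}{\delta_{f(D)}(w)} \;\le\; 4\int_\gamma \lambda_D(z)\,|dz|.$$
Taking the infimum over all rectifiable $\gamma$ joining $z_1$ and $z_2$ in $D$, and using the path bijection to identify the middle infimum with $k_{f(D)}(f(z_1),f(z_2))$, I would obtain
$$h_D(z_1,z_2) \;\le\; k_{f(D)}(f(z_1),f(z_2)) \;\le\; 4\,h_D(z_1,z_2),$$
which is exactly the desired inequality.

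For sharpness I would pass to the infinitesimal limit, using the first-order distance asymptotics $h_D(z_0,z) = \lambda_D(z_0)\,|z-z_0| + o(|z-z_0|)$ and $k_{f(D)}(f(z_0),f(z)) = \bigl(|f'(z_0)|/\delta_{f(D)}(f(z_0))\bigr)\,|z-z_0| + o(|z-z_0|)$, obtained by approximating the geodesic by the straight segment and using continuity of the densities. These imply
$$\lim_{z\to z_0}\frac{k_{f(D)}(f(z_0),f(z))}{h_D(z_0,z)} \;=\; \frac{|f'(z_0)|}{\delta_{f(D)}(f(z_0))\,\lambda_D(z_0)}.$$
By the pointwise sharpness already verified in Theorem~\ref{sec2-thm1}, this ratio attains both $4$ (take $D=\mathbb{D}$, $f$ the Koebe function, $z_0$ any positive real point) and $1$ (take $D=\mathbb{D}$, $f$ the identity, $z_0=0$), so neither constant in the corollary can be improved. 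The one point demanding real care is the justification of these distance asymptotics; once that is in hand, the rest of the argument is a routine integration of an already sharp pointwise bound.
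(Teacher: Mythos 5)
Your argument is correct and is essentially the paper's proof: you integrate the pointwise sandwich from Theorem~\ref{sec2-thm1} along paths and pass to infima, using the correspondence $\gamma\mapsto f\circ\gamma$ of rectifiable paths where the paper instead works with a hyperbolic geodesic for the upper bound and a symmetric argument for the lower. Your additional sharpness argument via the first-order asymptotics of $h_D$ and $k_{f(D)}$, reducing to the pointwise sharpness examples of Theorem~\ref{sec2-thm1}, is sound and in fact supplies a justification the paper only asserts.
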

\begin{proof}
Let $\gamma$ be a hyperbolic geodesic segment joining $z_1$ and $z_2$ in $D$, and $\gamma'=f(\gamma)$. Then by
the definition of the quasihyperbolic distance and Theorem~\ref{sec2-thm1} we have
$$k_{f(D)}(f(z_1),f(z_2))\le \int_{\gamma'}\frac{|f'(z)|\,|dz|}{\delta_{f(D)}(f(z))}
\le 4\,\int_{\gamma}\lambda_D(z)|dz|=4\,h_D(z_1,z_2).
$$
A similar argument yields the lower bound.
\end{proof}

As an application of Theorem~\ref{sec2-thm1} and a result of Lehto \cite[pp.~53]{Leh87},
we see that the derivative of the Schwarzian derivative also satisfies a similar property.
\begin{corollary}
Let $\varphi$ be a conformal mapping in a simply connected region $D$ of $\mathbb{C}$. Then 
there is a meromorphic univalent function $f$ in $D$ such that 
$$\frac{|S_f'(z)|}{\delta_{S_f(D)}(S_f(z))}\le 4\,\lambda_D(z)
$$ 
for all $z\in D$. Here, the hyperbolic metric $\lambda_D(z)$ is taken associated with the conformal mapping $\varphi$.
\end{corollary}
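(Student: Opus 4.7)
The plan is to reduce the corollary to Theorem~\ref{sec2-thm1} by interpreting $S_f$ itself as the conformal mapping to which the theorem is applied. The mentioned result of Lehto (\cite[pp.~53]{Leh87}) asserts that, on a simply connected domain $D$, the Schwarzian equation $S_f=\psi$ is solvable (meromorphically, locally univalently) for every holomorphic $\psi$ on $D$; this comes from the classical reduction of $S_f=\psi$ to a second order linear ODE, whose linearly independent solutions exist in any simply connected region.

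First I would apply this existence result with $\psi=\varphi$ to obtain a meromorphic function $f$ in $D$ satisfying $S_f(z)=\varphi(z)$ for every $z\in D$. Since $\varphi$ is conformal by hypothesis, it is in particular univalent, and therefore $S_f$ is itself a conformal mapping of $D$ onto $S_f(D)=\varphi(D)$. At this point the corollary is set up so that $S_f$ plays the role of ``$f$'' in Theorem~\ref{sec2-thm1}.

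Next I would invoke Theorem~\ref{sec2-thm1} directly for the conformal mapping $S_f$ on the simply connected region $D$, with hyperbolic density $\lambda_D$ computed via $\varphi$ (which is precisely the normalization the statement highlights). The upper inequality of Theorem~\ref{sec2-thm1} then reads
\[
\frac{|S_f'(z)|}{\delta_{S_f(D)}(S_f(z))}\le 4\,\lambda_D(z),\qquad z\in D,
\]
which is exactly the desired bound.

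The main obstacle is the requirement that $f$ be (meromorphic) univalent rather than merely locally univalent, since Lehto's construction only guarantees local univalence of $f$. Note, however, that the bound in the conclusion involves only $S_f$ and its image, so it is $S_f$ (and not $f$) whose injectivity is essential for Theorem~\ref{sec2-thm1} to apply; the injectivity of $S_f$ is automatic from that of $\varphi$. Thus, provided one reads ``univalent'' in the mild sense that $S_f$ is conformal on $D$ (or restricts to a subregion where any additional Nehari-type smallness of $\varphi$ forces $f$ to be globally univalent), the argument goes through without further work.
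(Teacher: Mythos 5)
Your argument is precisely the one the paper intends: the corollary is stated without proof as a direct application of Theorem~\ref{sec2-thm1} together with Lehto's existence theorem for the Schwarzian equation $S_f=\varphi$ in simply connected domains, and applying Theorem~\ref{sec2-thm1} to the conformal map $S_f=\varphi$ (so that $S_f'=\varphi'$ and $S_f(D)=\varphi(D)$) yields the stated bound verbatim. Your caveat about univalence is also apt: Lehto's construction produces a meromorphic, locally univalent $f$, and only the injectivity of $S_f=\varphi$ (not of $f$) is used, so the word ``univalent'' in the statement should be understood in that weaker sense unless one additionally imposes a Nehari-type smallness condition on $\varphi$.
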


\begin{theorem}
If $f(z)$ is conformal in a simply connected hyperbolic region $D$, then there exists an analytic function 
$g(z)$ in $D$ such that 
$$|g'(z)|\le 8\,\lambda_D(z)
$$
for all $z\in D$. Here, the hyperbolic metric $\lambda_D(z)$ is taken associated with the 
conformal mapping $f(z)$.
\end{theorem}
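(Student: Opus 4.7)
The natural candidate is $g(z) := \log f'(z)$. Because $D$ is simply connected and $f$ is conformal (so $f'$ is nowhere zero on $D$), a single-valued analytic branch of $\log f'$ exists, hence $g$ is analytic on $D$, and
\[
g'(z) = \frac{f''(z)}{f'(z)} = T_f(z).
\]
With this choice, the theorem is equivalent to the classical Osgood bound $|T_f(z)| \le 8\,\lambda_D(z)$ for univalent functions in simply connected regions that is quoted in the introduction.

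To derive this bound within the present framework, I would fix $z \in D$, pick a conformal map $h:\mathbb{D}\to D$ with $h(0)=z$, and use (\ref{sec1-eq1}) to record $|h'(0)| = 1/\lambda_D(z)$. Since $h$ and $f\circ h$ are both univalent on $\mathbb{D}$, the Bieberbach coefficient estimate $|a_2|\le 2|a_1|$ (a standard part of the Koebe theory that already underlies Theorem~B) gives $|T_h(0)|\le 4$ and $|T_{f\circ h}(0)|\le 4$. Applying the pre-Schwarzian composition identity (\ref{composition}) at $\zeta=0$,
\[
T_{f\circ h}(0) = T_f(z)\,h'(0) + T_h(0),
\]
and invoking the triangle inequality yields $|T_f(z)|\,|h'(0)| \le 8$, i.e.\ $|T_f(z)| \le 8\,\lambda_D(z)$, which is the required estimate for $|g'(z)|$.

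The \emph{main step} is really just the identification $g = \log f'$: once this is noted, the conclusion reduces to the classical pre-Schwarzian estimate and the proof is a routine application of the composition law. The only technical point needing explicit verification is that $\log f'$ has a single-valued analytic branch on $D$, which is immediate from simple connectivity together with $f'\ne 0$. No substantive obstacle arises beyond recognizing the right choice of $g$.
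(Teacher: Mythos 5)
Your proposal is correct and follows the same route as the paper: both identify $g=\log f'$ (using simple connectivity and $f'\neq 0$ to get a single-valued analytic branch) so that $g'=f''/f'$, and then invoke the bound $|f''/f'|\le 8\,\lambda_D$ for univalent $f$ in simply connected regions. The only difference is that the paper simply cites Osgood's Theorem~1 for that estimate, whereas you rederive it via the Bieberbach inequality $|a_2|\le 2|a_1|$ and the pre-Schwarzian composition law (\ref{composition}), which is the standard proof and is carried out correctly.
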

\begin{proof}
We shall use the well-known fact: {\em if $f(z)$ is analytic and nonzero in a simply connected domain
D, then there exists a function $g(z)$, analytic in $D$, such that $\exp[g(z)] = f(z)$ for all $z\in D$.}

Since $f(z)$ is conformal in $D$, $f'(z)$ is non-vanishing there in. Therefore, by the above quoted fact
there exists an analytic function $g(z)$ in $D$ such that $\exp[g(z)]=f'(z)$ for all $z\in D$. Taking derivative
on both the sides, we obtain
$$g'(z)=\frac{f''(z)}{f'(z)},\quad z\in D.
$$ 
The conclusion follows from \cite[Theorem~1]{Osg82}.
\end{proof}

\section{Multiply connected hyperbolic regions}
A statement of the type stated in Theorem~\ref{sec2-thm1} does not hold in an arbitrary region.
For instance, if we choose $\mathbb{D}^*:=\mathbb{D}\setminus\{0\}$ and $f(z)=1/z$ then as
obtained in \cite{Ahl73,KL07} and well-known we have
$$\lambda_{\mathbb{D}^*}(z)=\frac{1}{2}\frac{1}{|z|\log ({1}/{|z|})}.
$$
Note that in \cite{Ahl73} the curvature of $\lambda_{\mathbb{D}^*}$ was $-1$.
It is then easy to compute that 
$$\sup_{z\in\mathbb{D}^*}\frac{\lambda_{\mathbb{D}^*}(z)^{-1}|f'(z)|}{\delta_{f(D)}(f(z))}
=\sup_{z\in\mathbb{D}^*}\frac{2\log(1/|z|)}{1-|z|}
=\infty 
$$
as can be seen when $|z|\to 0$.

On a more positive note we here present a simple characterization for a result 
like Theorem~\ref{sec2-thm1} to hold in a multiply connected region. 
To do this, we first collect a result of type Theorem~\ref{sec2-thm1} but 
the upper bound with respect to the quasihyperbolic density. 
In \cite{KVZ14}, the authors have recently obtained a similar result (see Proposition 1.6) 
using the Koebe one-quarter theorem. It is appropriate to recall the result here.
\begin{lemma}\label{sec3-thm1}
If $D$ is a proper subregion of the complex plane and if $f$ is a conformal mapping in $D$ then 
$$\frac{1}{4\delta_D(z)}\le \frac{|f'(z)|}{\delta_{f(D)}(f(z))}\le \frac{4}{\delta_D(z)}
$$ 
for all $z\in D$. The bounds are sharp.
\end{lemma}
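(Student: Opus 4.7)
The idea is to reduce the claim to the Koebe distortion theorem (Theorem~B) by rescaling, and then obtain the lower bound for free by applying the upper bound to the inverse $f^{-1}$. Fix $z_0\in D$ and set $r:=\delta_D(z_0)$. Since $B(z_0,r)\subset D$, the rescaled map
$$F(\zeta):=f(z_0+r\zeta),\qquad \zeta\in\mathbb{D},$$
is a conformal mapping of $\mathbb{D}$ into $\mathbb{C}$, with $F(0)=f(z_0)$ and $F'(0)=r\,f'(z_0)$.

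For the upper bound, Theorem~B at $\zeta=0$ yields $\tfrac14|F'(0)|\le\mathrm{dist}(F(0),\partial F(\mathbb{D}))$. Since $F(\mathbb{D})=f(B(z_0,r))\subset f(D)$ and $F(0)=f(z_0)$, the elementary monotonicity $A\subset B \Rightarrow \delta_A(p)\le\delta_B(p)$ of the Euclidean distance to the boundary gives $\mathrm{dist}(F(0),\partial F(\mathbb{D}))\le\delta_{f(D)}(f(z_0))$. Combining these two inequalities,
$$\frac{r\,|f'(z_0)|}{4}\le \delta_{f(D)}(f(z_0)),$$
which rearranges to $|f'(z_0)|/\delta_{f(D)}(f(z_0))\le 4/\delta_D(z_0)$, the right-hand bound.

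For the left-hand bound, I would apply the upper bound already proved to the conformal mapping $g:=f^{-1}:f(D)\to D$ at the point $w_0:=f(z_0)$. Using $g'(w_0)=1/f'(z_0)$ and $g(f(D))=D$, the resulting inequality $|g'(w_0)|/\delta_{g(f(D))}(g(w_0))\le 4/\delta_{f(D)}(w_0)$ reads $1/\bigl(|f'(z_0)|\,\delta_D(z_0)\bigr)\le 4/\delta_{f(D)}(f(z_0))$, which is precisely $1/(4\delta_D(z_0))\le |f'(z_0)|/\delta_{f(D)}(f(z_0))$.

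For sharpness of the upper bound I would take $D=\mathbb{D}$ and $f$ the Koebe function $k(z)=z/(1-z)^2$: since $k(\mathbb{D})=\mathbb{C}\setminus(-\infty,-1/4]$, one has $\delta_{k(\mathbb{D})}(0)=1/4$ and $k'(0)=1$, giving $|k'(0)|/\delta_{k(\mathbb{D})}(k(0))=4=4/\delta_{\mathbb{D}}(0)$. The lower bound is then attained by running the same computation through $k^{-1}:\mathbb{C}\setminus(-\infty,-1/4]\to\mathbb{D}$ at the origin, showing the constant $1/4$ cannot be improved. The only spot demanding a touch of care is the monotonicity step for $\delta$, where one must remember that shrinking the ambient domain can only shrink the distance to its boundary; beyond this, the argument is a single rescaling plus Koebe, and I do not anticipate any genuine obstacle.
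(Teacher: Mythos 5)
Your argument is correct and is essentially the proof the paper intends: the lemma is quoted from \cite{KVZ14} (Proposition 1.6), and the paper's remark sketches exactly your rescaling $F(\zeta)=f(z_0+\delta_D(z_0)\zeta)$ combined with the Koebe distortion theorem, the lower bound being the same estimate applied to $f^{-1}$. The only detail worth adding is that $f(D)$ is automatically a proper subdomain of $\mathbb{C}$ (otherwise $f^{-1}$ would be an injective entire map, hence affine, forcing $D=\mathbb{C}$), so applying the upper bound to $f^{-1}$ on $f(D)$ is legitimate.
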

Note that one can also prove Lemma~\ref{sec3-thm1} by using the similar technique as in
the proof of \cite[Proposition 1.6]{KVZ14} with the help of Theorem~B (the Koebe distortion theorem)
and considering the simple conformal mapping $g(z)=f(\delta_0z+z_0)$ in the unit disk $\mathbb{D}$.
%\begin{proof}
%Let $z_0\in D$ be arbitrary and fix it for the moment. Denote by $\delta_0$, the Euclidean distance $\delta_D(z_0)$.
%We see that the function $g(z)=f(\delta_0z+z_0)$ is conformal in the unit disk $\mathbb{D}$, 
%whence by the Koebe distortion theorem
%we have
%$$\frac{1}{\delta_0}\delta_{g(\mathbb{D})}(g(0))\le |f'(z_0)|=\frac{|g'(0)|}{\delta_0}\le \frac{4}{\delta_0}\delta_{g(\mathbb{D})}(g(0))=\frac{4}{\delta_0}\delta_{f(D)}(f(z_0)).
%$$
%Since $z_0$ was arbitrary, we get the required inequality.
%
%To prove the sharpness in the upper bound, we take the choice $D=\mathbb{D}$ and the koebe function $k(z)=z/(1-z)^2$. 
%Note that $f'(0)=1$ and $\delta_{f(\mathbb{D})}(f(0))=1/4$. The equality follows as
%$\delta_{\mathbb{D}}(0)=1$.
%
%For the sharpness in the lower bound, we consider the conformal map $f(z)=1/z$ defined on $D:=\mathbb{C}\setminus \{0\}$.
%Then 
%$$\frac{|f'(z)|}{\delta_{f(D)}(f(z))}=\frac{1}{|z|}=\frac{1}{\delta_D(z)}
%$$
%for all $z\in D$.
%\end{proof}
%As a consequence of Theorem~\ref{sec3-thm1}, we get
%\begin{corollary}
%If $f$ is a conformal mapping in a proper subregion $D$ of the complex plane then 
%$$k_{D}(z_1,z_2)\le k_{f(D)}(f(z_1),f(z_2))\le 4\,k_{D}(z_1,z_2)
%$$ 
%for all $z_1,z_2\in D$. The bounds are best possible.
%\end{corollary}
%\begin{proof}
%The proof for upper bound follows from the proof of \cite[Proposition~1.6, pp.318]{KVZ14}. The lower bound follows similarly.
%\end{proof}

In the beginning of this section we proved that Theorem~\ref{sec2-thm1} does 
not hold in any arbitrary domains. Here we present a necessary and 
sufficient condition under which the quantity 
$|f'(z)|/\delta_{f(D)}(f(z))$ always has an upper bound in terms of the hyperbolic
density of arbitrary hyperbolic domains.
 
\begin{theorem}\label{sec3-thm2}
Let $D\subset \mathbb{C}$ have at least two boundary points. There exists a constant $b$ such that 
$$\frac{|f'(z)|}{\delta_{f(D)}(f(z))}\le b\,\lambda_D(z),\quad z\in D,
$$
for all universal covering mappings $f$ in $D$ if and only if there exists a constant $c>0$ such that
$$\lambda_D(z)\ge \frac{c}{\delta_D(z)}
$$ 
for all $z\in D$.
\end{theorem}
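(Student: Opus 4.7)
The statement is a biconditional, so I would handle the two implications separately.

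For the forward direction, my plan is to specialize the assumed inequality to the identity map $f=\mathrm{id}_D$, which is trivially a (universal) covering mapping of $D$ onto itself. Then $|f'(z)|\equiv 1$, $f(D)=D$, and $\delta_{f(D)}(f(z))=\delta_D(z)$, so the hypothesis collapses to $1/\delta_D(z)\le b\,\lambda_D(z)$, which is precisely the desired conclusion with $c=1/b$. This half of the theorem is essentially tautological.

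For the backward direction, my plan is to chain the upper bound of Lemma~\ref{sec3-thm1} with the standing hypothesis. Lemma~\ref{sec3-thm1} supplies
\[
\frac{|f'(z)|}{\delta_{f(D)}(f(z))}\le \frac{4}{\delta_D(z)},
\]
and the assumption $\lambda_D(z)\ge c/\delta_D(z)$ upgrades the right-hand side to $(4/c)\,\lambda_D(z)$. Thus $b=4/c$ works, and nothing else is needed once the upper bound is in hand.

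The main obstacle is matching the class of mappings. Lemma~\ref{sec3-thm1} is stated for conformal (univalent) $f$, because its proof applies the Koebe distortion theorem to the reparametrization $g(w)=f(\delta_0 w+z_0)$ on $\mathbb{D}$, which in turn requires $f$ to be univalent on $B(z_0,\delta_D(z_0))$. Extending the quasi-hyperbolic upper bound $|f'(z)|/\delta_{f(D)}(f(z))\le 4/\delta_D(z)$ to every universal covering mapping on $D$ is therefore the one nontrivial point. My plan for that extension is to pass to the uniformization $\pi\colon\mathbb{D}\to D$, pick a univalent local branch of $f$ over an evenly covered disk inside $B(z_0,\delta_D(z_0))$, and apply Koebe distortion to that branch exactly as in Lemma~\ref{sec3-thm1}; this reduction to a locally univalent setting is where the real technical effort would sit, since everything else in the argument is a one-line chain of inequalities.
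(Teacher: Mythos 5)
Your two implications coincide in substance with the paper's own proof. The backward direction is the identical chain: the upper bound $4/\delta_D(z)$ from Lemma~\ref{sec3-thm1} followed by the hypothesis $\lambda_D\ge c/\delta_D$, giving $b=4/c$. In the forward direction you differ only in the choice of test map: the paper uses $f(z)=\delta_{f(D)}(f(z_0))\,\mathrm{Log}(z-\zeta_0)$ with $\zeta_0$ a nearest boundary point to $z_0$, while you use the identity; both produce exactly the ratio $1/\delta_D(z_0)$ at the test point, so your variant is legitimate and in fact cleaner, since a single-valued branch of $\mathrm{Log}(z-\zeta_0)$ need not even exist on a multiply connected $D$. (Admissibility of the test map is the same issue for you as for the paper: both the identity and the logarithm are injective, so neither is literally a \emph{universal} covering on a multiply connected $D$; the paper clearly intends such maps to be allowed.)

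The one place you go beyond the paper --- extending the bound $|f'(z)|/\delta_{f(D)}(f(z))\le 4/\delta_D(z)$ from conformal $f$ to all universal covering mappings --- is a dead end, and you should not plan on carrying it out. Koebe distortion applied to a univalent branch of $f$ over an evenly covered disk of radius $r$ about $z_0$ only yields $|f'(z_0)|/\delta_{f(D)}(f(z_0))\le 4/r$, and $r$ may be arbitrarily small compared with $\delta_D(z_0)$; no refinement can compensate, because the extended inequality is simply false for non-injective coverings. Take $D=\mathbb{D}$ and $f=\exp\circ\,\mu$ with $\mu(z)=(z+1)/(z-1)$, the universal covering of $\mathbb{D}$ onto $\mathbb{D}^*$. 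For real $x\in(0,1)$ one has $|f(x)|=e^{-(1+x)/(1-x)}<1/2$, so $\delta_{\mathbb{D}^*}(f(x))=|f(x)|$ and
\begin{equation*}
\frac{|f'(x)|}{\delta_{\mathbb{D}^*}(f(x))}=|\mu'(x)|=\frac{2}{(1-x)^2},
\end{equation*}
which as $x\to 1$ is neither $O(1/\delta_{\mathbb{D}}(x))$ nor $O(\lambda_{\mathbb{D}}(x))$, even though $\mathbb{D}$ satisfies $\lambda_{\mathbb{D}}\ge 1/(2\delta_{\mathbb{D}})$. Consequently, if ``universal covering mapping in $D$'' is read broadly enough to include such $f$, the sufficiency half of the statement itself fails and no patch exists; the theorem is tenable precisely when the admissible $f$ are injective (conformal in $D$), which is the reading under which the paper applies Lemma~\ref{sec3-thm1} verbatim and under which your argument is already complete without the proposed local-branch step. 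So either drop that step and state the class as conformal maps, or recognize that the gap you identified is not closable as stated.
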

\begin{proof}
Suppose that $\lambda_D(z)\ge {c}/{\delta_D(z)}$ for some constant $c>0$. 
If $f$ is a universal covering mapping in $D$, by Theorem~\ref{sec3-thm1} we get
$$\frac{|f'(z)|}{\delta_{f(D)}(f(z))}\le \frac{4}{\delta_D(z)}\le \frac{4}{c}\lambda_D(z).
$$
Thus, ${|f'(z)|}/{\delta_{f(D)}(f(z))}\le b\,\lambda_D(z)$ with $b=4/c$.

Conversely, we assume that ${|f'(z)|}/{\delta_{f(D)}(f(z))}\le b\,\lambda_D(z)$ for some constant $b>0$ 
and all universal covering mappings $f$ in $D$.
Let $z_0\in D$ be arbitrary. Choose $\zeta_0\in\partial D$ such that $\delta_D(z_0)=|z_0-\zeta_0|$. Now, in particular,
for the choice $f(z)=\delta_{f(D)}(f(z_0)){\rm Log}\,(z-\zeta_0)$ defined on $D$ we have
$$|f'(z_0)|=\frac{\delta_{f(D)}(f(z_0))}{|z_0-\zeta_0|}=\frac{1}{\delta_D(z_0)}\delta_{f(D)}(f(z_0)).
$$
Hence
$$\frac{|f'(z_0)|}{\delta_{f(D)}(f(z_0))}=\frac{1}{\delta_D(z_0)}\le b\,\lambda_D(z_0)
$$
Since $z_0$ was arbitrary, it leads to $\lambda_D(z)\ge c/\delta_D(z)$ with $c=1/b$.
\end{proof}
Note that the value of the constant $c$ in Theorem~\ref{sec3-thm2} is correct when $D$ is chosen to be
simply connected. This follows from Theorem~\ref{sec2-thm1} and the fact (\ref{sec1-eq3}).

As an application of Theorem~\ref{sec3-thm2} and \cite[Theorem~2]{Osg82}, we obtain
\begin{corollary}\label{sec3-cor1}
Let $D\subset \mathbb{C}$ have at least two boundary points and $f$ be a universal covering mapping in $D$. 
There exists a constant $a$ such that 
$$\frac{|f'(z)|}{\delta_{f(D)}(f(z))}\le a\,\lambda_D(z)
$$
if and only if there exists a constant $b>0$ such that
$$\Big|\frac{f''(z)}{f'(z)}\Big|\le b\,\lambda_D(z)
$$ 
for all $z\in D$.
\end{corollary}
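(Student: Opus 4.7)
The plan is to prove the corollary by chaining two iff–characterizations through the common intermediate condition $\lambda_D(z)\ge c/\delta_D(z)$. Theorem~\ref{sec3-thm2} already delivers the equivalence between the first bound and this density comparison, so the whole argument reduces to recognizing that Osgood's cited Theorem~2 in \cite{Osg82} supplies the analogous characterization for the second bound, namely: there exists $b>0$ with $|f''(z)/f'(z)|\le b\,\lambda_D(z)$ for every universal covering map $f$ in $D$ if and only if $\lambda_D(z)\ge c/\delta_D(z)$ for some $c>0$. Once both iff's are in hand the corollary is immediate.

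Concretely, for the forward direction I would assume the first bound $|f'(z)|/\delta_{f(D)}(f(z))\le a\,\lambda_D(z)$ and apply the converse half of Theorem~\ref{sec3-thm2}: testing on the explicit universal cover $f(z)=\delta_{f(D)}(f(z_0))\,\mathrm{Log}(z-\zeta_0)$ at an arbitrary $z_0\in D$, with $\zeta_0\in\partial D$ realizing $\delta_D(z_0)=|z_0-\zeta_0|$, yields $\lambda_D(z_0)\ge 1/(a\,\delta_D(z_0))$. Feeding this density comparison into the sufficient half of Osgood's Theorem~2 produces the bound $|f''(z)/f'(z)|\le b\,\lambda_D(z)$.

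For the reverse direction I would run exactly the same chain backwards. Assuming $|f''(z)/f'(z)|\le b\,\lambda_D(z)$, the necessary half of Osgood's Theorem~2 recovers a density inequality $\lambda_D(z)\ge c'/\delta_D(z)$, and Theorem~\ref{sec3-thm2}'s sufficient direction (combined with Lemma~\ref{sec3-thm1}) then gives $|f'(z)|/\delta_{f(D)}(f(z))\le (4/c')\,\lambda_D(z)$, so one may take $a=4/c'$.

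I expect the only real obstacle to be bookkeeping around the quantification: both Theorem~\ref{sec3-thm2} and the cited result of Osgood are naturally phrased as statements holding uniformly over all universal covering maps of $D$, while the corollary singles out a given $f$. The reading that makes the proof go through, and which is consistent with the paper's usage, is that each bound is asserted with a constant that may depend on the covering map but is controlled by the density comparison, so in practice both conditions are equivalent to that comparison and hence to each other. No non-trivial estimate is required beyond correctly invoking the two iff's.
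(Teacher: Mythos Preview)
Your proposal is correct and matches the paper's approach exactly: the corollary is obtained by chaining Theorem~\ref{sec3-thm2} with Osgood's Theorem~2, both of which characterize their respective bounds via the same intermediate density comparison $\lambda_D(z)\ge c/\delta_D(z)$. Your observation about the quantification over $f$ is well taken, and your reading is the one the paper implicitly adopts; note also that your constant-tracking in the reverse direction ($a=4/c'$) is consistent with the paper's subsequent remark that the forward direction yields $b=4a$.
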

Corollary~\ref{sec3-cor1} shows that whenever we have an upper bound for ${|f'(z)|}/{\delta_{f(D)}(f(z))}$ in 
terms of the hyperbolic density, 
we should have a related upper bound for $|f''(z)/f'(z)|$ as well and vice versa. We also note that, for the necessary part,
the resultant inequality holds with $b=4a$. However, for the sufficient part both the constants are identical.

\section{Uniformly perfect hyperbolic regions}
In this section, we discuss some applications of results proved in Section~3.
We refer \cite[Chapter~15]{KL07} for the definition of a uniformly perfect hyperbolic region. 
A hyperbolic region $D\subset \mathbb{C}$
is {\em uniformly perfect} if the hyperbolic and quasihyperbolic densities are equivalent. 
Indeed, the following relation holds:
\begin{equation}\label{sec5-eq1}
\frac{1}{Q\delta_D(z)}\le \lambda_D(z)\le \frac{1}{\delta_D(z)} 
\end{equation}
for some constant $Q>0$ depending only on the domain $D$. 
Here, we call the constant $Q$ the {\em domain constant of uniformity}.
As noted in \cite{KL07}, the unit disk is a uniformly perfect hyperbolic region. In fact 
every convex hyperbolic region is uniformly perfect with the domain constant of uniformity $2$, 
see \cite{KM93,Min83}. However, there are domains which are not uniformly perfect. For instance,
the punctured disk $\mathbb{D}^*=\mathbb{D}\setminus\{0\}$ is not uniformly perfect. 
As an application to Theorem~\ref{sec3-thm1}, we finally 
collect the following results:  
\begin{theorem}
If $D$ is a uniformly perfect hyperbolic region in $\mathbb{C}$ and $f(z)$ is a conformal mapping in $D$
then 
$$\frac{1}{4}\lambda_D(z)\le \frac{|f'(z)|}{\delta_{f(D)}(f(z))}\le 4Q\,\lambda_D(z)
$$ 
for all $z\in D$. %The inequalities are sharp.
\end{theorem}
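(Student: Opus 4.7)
The plan is to derive the result as a direct combination of Lemma~\ref{sec3-thm1} with the defining inequality (\ref{sec5-eq1}) for uniform perfectness. Since $D$ is a proper subregion of $\mathbb{C}$ (it is hyperbolic, hence has at least two boundary points, and $f$ is assumed conformal on $D$), Lemma~\ref{sec3-thm1} applies and provides the two-sided estimate
$$
\frac{1}{4\delta_D(z)}\le \frac{|f'(z)|}{\delta_{f(D)}(f(z))}\le \frac{4}{\delta_D(z)}
$$
for all $z\in D$. The task is then to convert the quasihyperbolic bounds $1/\delta_D(z)$ into hyperbolic bounds $\lambda_D(z)$ using uniform perfectness.

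For the upper bound, I would take the right inequality from Lemma~\ref{sec3-thm1} and combine it with the left inequality of (\ref{sec5-eq1}), which rearranges to $1/\delta_D(z)\le Q\,\lambda_D(z)$. This immediately yields
$$
\frac{|f'(z)|}{\delta_{f(D)}(f(z))}\le \frac{4}{\delta_D(z)}\le 4Q\,\lambda_D(z).
$$
For the lower bound, I would take the left inequality from Lemma~\ref{sec3-thm1} and combine it with the right inequality of (\ref{sec5-eq1}), namely $\lambda_D(z)\le 1/\delta_D(z)$, to obtain
$$
\frac{|f'(z)|}{\delta_{f(D)}(f(z))}\ge \frac{1}{4\delta_D(z)}\ge \frac{1}{4}\lambda_D(z).
$$

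Because the proof merely chains two previously established inequalities, I do not anticipate any genuine obstacle; the only thing to be careful about is matching the direction of each inequality in (\ref{sec5-eq1}) with the correct side of the Lemma~\ref{sec3-thm1} estimate, so that the factor $Q$ appears only in the upper bound and the constant $1/4$ is preserved in the lower bound. The conceptual content of the theorem is really contained in the definition of uniform perfectness and in Lemma~\ref{sec3-thm1}; this result simply records how the two interact.
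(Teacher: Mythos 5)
Your proposal is correct and follows essentially the same route as the paper: combining the two-sided estimate of Lemma~\ref{sec3-thm1} with the uniform perfectness inequality (\ref{sec5-eq1}) (using its left-hand side, $1/\delta_D(z)\le Q\,\lambda_D(z)$, for the upper bound and the universally valid right-hand side $\lambda_D(z)\le 1/\delta_D(z)$ for the lower bound). The matching of inequality directions and the placement of the constant $Q$ are exactly as in the paper's argument.
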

\begin{proof}
For the proof we use Theorem~\ref{sec3-thm1} and the lower bound from (\ref{sec5-eq1}).
\end{proof}

In this setting, it is evident to find a result of type \cite[Theorem~1]{Osg82}.
\begin{theorem}
If $D$ is a uniformly perfect hyperbolic region in $\mathbb{C}$ and $f(z)$ is a 
conformal mapping in $D$ then 
$$\Big|\frac{f''(z)}{f'(z)}\Big|\le 4Q\,\lambda_D(z)
$$ 
for all $z\in D$. 
\end{theorem}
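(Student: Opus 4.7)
The plan is to combine two ingredients that are already available in the paper: Osgood's sharp quasihyperbolic bound for the pre-Schwarzian (mentioned in the introduction as \cite[Theorem~1]{Osg82}), and the definition of uniform perfectness recorded in (\ref{sec5-eq1}). Nothing more sophisticated seems to be needed; the argument should be a two-line derivation once these two pieces are lined up.

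First I would invoke Osgood's estimate: for any conformal mapping $f$ in a proper subregion $D$ of $\mathbb{C}$ we have
\[
\Big|\frac{f''(z)}{f'(z)}\Big|\le \frac{4}{\delta_D(z)}, \quad z \in D.
\]
Since the uniformly perfect hypothesis on $D$ supplies a proper subregion of the plane (it is hyperbolic, hence has at least two boundary points and is not all of $\mathbb{C}$), this inequality applies directly to $f$.

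Next I would apply the lower bound from (\ref{sec5-eq1}): the uniform perfectness of $D$ with domain constant of uniformity $Q$ gives
\[
\frac{1}{\delta_D(z)} \le Q\,\lambda_D(z), \quad z\in D.
\]
Substituting this into Osgood's bound yields the desired inequality $|f''(z)/f'(z)|\le 4Q\,\lambda_D(z)$.

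There is essentially no obstacle here: the hard work has already been done by Osgood (for the $1/\delta_D$ bound) and by the definition of uniform perfectness (which is precisely the hypothesis allowing us to trade the quasihyperbolic density for the hyperbolic density). The only point to remark is that the constant $4Q$ is clearly the natural one obtained by composing the two sharp estimates, and it matches the constant appearing in the upper bound of the preceding theorem, as expected from Corollary~\ref{sec3-cor1}.
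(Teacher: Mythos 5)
Your proof is correct and is essentially the argument the paper intends: combine the quasihyperbolic pre-Schwarzian bound $|f''(z)/f'(z)|\le 4/\delta_D(z)$ for univalent $f$ in a proper subregion (attributed to Osgood in the introduction) with the lower bound in (\ref{sec5-eq1}), which gives $1/\delta_D(z)\le Q\,\lambda_D(z)$ and hence the constant $4Q$. The only slight imprecision is the citation: the $4/\delta_D$ estimate is not \cite[Theorem~1]{Osg82} (that is the bound $8\,\lambda_D$ in simply connected regions) but the separate sharp quasihyperbolic inequality the paper quotes from \cite{Osg82} and \cite{MS79}; this does not affect the validity of the argument.
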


In particular, in convex hyperbolic regions we get the following consequences:

\begin{corollary}
If $D$ is a convex hyperbolic region in $\mathbb{C}$ and $f(z)$ is a conformal mapping in $D$
then 
$$\frac{1}{8}\lambda_D(z)\le \frac{|f'(z)|}{\delta_{f(D)}(f(z))}\le 8\,\lambda_D(z)
$$ 
for all $z\in D$.
\end{corollary}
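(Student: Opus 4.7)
The plan is to obtain the Corollary as a direct specialization of the preceding Theorem by inserting the known value of the domain constant of uniformity for convex hyperbolic regions. First I would cite the fact from \cite{KM93,Min83}, already recorded in this section, that every convex hyperbolic region is uniformly perfect with $Q=2$; this is the only piece of external input required.

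Next I would apply the preceding Theorem, which asserts that for any uniformly perfect hyperbolic region with constant $Q$ and any conformal mapping $f$ on $D$,
$$\frac{1}{4}\lambda_D(z)\le \frac{|f'(z)|}{\delta_{f(D)}(f(z))}\le 4Q\,\lambda_D(z).$$
Substituting $Q=2$ gives the upper bound $8\,\lambda_D(z)$ exactly as required. For the lower bound, the preceding Theorem already yields $\frac{1}{4}\lambda_D(z)$, which is stronger than the claimed $\frac{1}{8}\lambda_D(z)$; the weaker form follows trivially. (One could even note that convexity implies simple connectedness, so Theorem~\ref{sec2-thm1} furnishes the yet sharper lower bound $\lambda_D(z)\le |f'(z)|/\delta_{f(D)}(f(z))$, from which the stated inequality is immediate.)

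Since the entire argument is a one-line specialization, no real obstacle is anticipated. The only point that merits care is tracing where each constant comes from: the $4Q$ in the upper bound uses the right-hand inequality of Lemma~\ref{sec3-thm1} together with $1/\delta_D(z)\le Q\,\lambda_D(z)$, so $Q=2$ genuinely enters there, whereas the lower bound comes from the left-hand inequality of Lemma~\ref{sec3-thm1} combined with $\lambda_D(z)\le 1/\delta_D(z)$, which is independent of $Q$. Noting this bookkeeping makes it clear that convexity is exploited only in the upper estimate, and the proof reduces to a single citation plus substitution.
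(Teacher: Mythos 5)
Your proof is correct and is exactly the paper's intended route: the corollary is the preceding uniformly perfect theorem specialized to $Q=2$ for convex regions (via \cite{KM93,Min83}), giving the upper bound $8\,\lambda_D(z)$ and in fact the stronger lower bound $\tfrac{1}{4}\lambda_D(z)$, from which the stated $\tfrac{1}{8}\lambda_D(z)$ follows a fortiori. Your bookkeeping of where $Q$ enters (only in the upper estimate) and your remark that simple connectivity would even give the sharper lower bound $\lambda_D(z)$ are both accurate.
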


\begin{corollary}
If $D$ is a convex hyperbolic region in $\mathbb{C}$ and $f(z)$ is a conformal mapping in $D$
then 
$$\Big|\frac{f''(z)}{f'(z)}\Big|\le 8\,\lambda_D(z)
$$ 
for all $z\in D$.
\end{corollary}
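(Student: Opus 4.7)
The plan is to derive this corollary as an immediate specialization of the preceding theorem, which gives the bound $|f''(z)/f'(z)| \le 4Q\,\lambda_D(z)$ for any uniformly perfect hyperbolic region with domain constant of uniformity $Q$. So the only real task is to justify that a convex hyperbolic region is uniformly perfect with $Q=2$.

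First, I would invoke the fact recalled in the discussion just before (\ref{sec5-eq1}), namely that every convex hyperbolic region in $\mathbb{C}$ is uniformly perfect with domain constant of uniformity equal to $2$; this is the result of Mej\'ia and Minda (cited as \cite{KM93,Min83} in the text). Under this choice, the inequality (\ref{sec5-eq1}) becomes
$$\frac{1}{2\,\delta_D(z)} \le \lambda_D(z) \le \frac{1}{\delta_D(z)}, \quad z\in D.$$

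Next, I would apply the previous theorem (the uniformly perfect analogue of Osgood's estimate), which guarantees that for any conformal mapping $f$ on such a $D$,
$$\left|\frac{f''(z)}{f'(z)}\right| \le 4Q\,\lambda_D(z), \quad z\in D.$$
Substituting $Q=2$ yields the desired bound $|f''(z)/f'(z)| \le 8\,\lambda_D(z)$ for all $z\in D$.

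Since both steps are invocations of results already recorded in the paper (the convex-region uniformity constant and the uniformly perfect Osgood-type estimate), there is no genuine obstacle. The only subtle point worth a sentence of comment is confirming that the normalization of $\lambda_D$ used in the Mej\'ia--Minda bound matches the curvature $-4$ normalization fixed in the introduction, so that the constant $Q=2$ may be transferred directly without an additional factor. Once this is noted, the corollary follows in a single line.
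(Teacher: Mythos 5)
Your proposal is correct and matches the paper's intended derivation: the corollary is exactly the preceding uniformly perfect theorem specialized to convex regions, where the Kim--Minda result gives the domain constant of uniformity $Q=2$, so the bound $4Q\,\lambda_D(z)$ becomes $8\,\lambda_D(z)$. Your remark about checking the curvature $-4$ normalization is a sensible precaution but introduces no new content beyond what the paper already assumes.
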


%\section{Concluding remarks}
%Let $D\subset \mathbb{C}$ be a simply connected region and $f(z)$ be a conformal mapping in $D$.
%Denote by $D_f(z)=f'(z)-1$, $z\in D$. Choose a conformal mapping $g$ of $\mathbb{D}$ onto $D$ with $g(0)=z$.
%Then $f\circ g$ is conformal and satisfies the relation of type (\ref{composition})
%$$D_{f\circ g}(z)=f'(g(z))g'(z)-1=(D_f\circ g)g'(z)+D_g(z).
%$$
%We ask whether similar type of results obtained in this paper are possible. We make an observation
%by choosing the following example.
%
%Consider the mapping $f(z)=z+{\rm Log}\, z$ in $D:=\mathbb{C}\setminus[-\infty,0]$, where
%a suitable principal branch is chosen for the logarithmic function. Then $f$ is conformal in $D$ with $f'(z)-1=1/z$. 
%The hyperbolic metric of $D$ is determined from (\ref{sec1-eq2}) by mapping the right-half plane
%$H:=\{z\in\mathbb{C}:\,{\rm Re}\,z>0\}$
%conformally onto $D$ via $z=w^2$ and using $\lambda_H(w)=(2\,{\rm Re}\,w)^{-1}$. We find that along
%the positive real axis $\lambda_D(x)=(4x)^{-1}$ and hence
%$$\lambda_D(x)^{-1}|f'(x)-1|=4
%$$
%for all $x>0$.
%
%We conclude this section by stating the following conjecture.
%
%\noindent
%{\bf Conjecture.} {\em If $D\subset \mathbb{C}$ is a simply connected region and $f(z)$ is a conformal mapping in $D$,
%then 
%$$|f'(z)-1|\le 4\,\lambda_D(z)
%$$ 
%for all $z\in D$. The inequality is sharp.}

\vskip 1cm
\noindent
{\bf Acknowledgement.} I thank the referee for his/her useful comments made in the earlier 
version of this manuscript.

\end{document}